\documentclass[10pt]{amsart}
\usepackage[left=1in, right=1in]{geometry} 
\usepackage[utf8]{inputenc}
\usepackage{amssymb}
\usepackage{amscd,amsfonts,amsmath,amssymb, bbm,dsfont, comment}
\usepackage{amsmath}
\usepackage{url}
\usepackage{amsfonts}
\usepackage{amssymb}\usepackage{enumerate,epsf,fancyhdr,float,graphicx,tabularx}
\usepackage{latexsym,mathrsfs,multirow}
\usepackage{wasysym}
\usepackage{xypic}
\usepackage[all]{xy}\usepackage[OT2,T1]{fontenc}
\usepackage[modulo,mathlines,displaymath, running]{lineno}
\usepackage{amsmath,mathrsfs,enumerate,wasysym}
\usepackage{xypic,hhline}
\usepackage[all]{xy}
\usepackage[OT2,T1]{fontenc}
\usepackage[modulo,mathlines,displaymath]{lineno}
\usepackage{tikz,tikz-cd}
\usepackage{dashrule}
\usetikzlibrary{matrix}
\usepackage[modulo,mathlines,displaymath]{lineno}

\newtheorem{theorem}{Theorem}[section]

\DeclareSymbolFont{cyrletters}{OT2}{wncyr}{m}{n}\DeclareMathSymbol{\Sha}{\mathalpha}{cyrletters}{"58}
\DeclareMathSymbol{\FSha}{\mathalpha}{cyrletters}{"11}


\renewcommand{\phi}{{\varphi}}

\renewcommand{\geq}{\geqslant}
\renewcommand{\leq}{\leqslant}

\newcommand{\HIwcyc}{H^1_{\mathrm{Iw},\mathrm{cyc}}}
\newcommand{\Dcrisp}{\mathbb{D}_{\mathrm{cris},p}}
\newcommand{\Fil}{\mathrm{Fil}}
\newcommand{\cris}{\mathrm{cris}}


\newcommand{\links}{\left(\begin{array}{cc}}
\newcommand{\rechts}{\end{array}\right)}
\newcommand{\bai}{\left[\begin{array}{cc}}
\newcommand{\dai}{\end{array}\right]}
\newcommand{\hidari}{\left(\begin{array}{c}}
\newcommand{\migi}{\end{array}\right)}


\newcommand{\Q}{\mathbb{Q}}

\newcommand{\Z}{\mathbb{Z}}




\newcommand{\calH}{\mathcal{H}}
\newcommand{\calI}{\mathcal{I}}

\newcommand{\calL}{\mathcal{L}}
\newcommand{\calM}{\mathcal{M}}

\newcommand{\calX}{\mathcal{X}}










\newcommand{\Gal}{\operatorname{Gal}}

\newcommand{\Hom}{\operatorname{Hom}}

\newcommand{\rank}{\operatorname{rank}}
\newcommand{\coker}{\operatorname{coker}}

\newcommand{\GL}{\operatorname{GL}}
\newcommand{\ord}{\operatorname{ord}}
\newcommand{\col}{\operatorname{Col}}

\newcommand{\Sel}{\operatorname{Sel}}

\newcommand{\uI}{\underline{I}}
\newcommand{\uJ}{\underline{J}}






\newtheorem{auxiliary proposition}[theorem]{Auxiliary Proposition}

\newtheorem{definition}[theorem]{Definition}

\newtheorem{lemma}[theorem]{Lemma}
\newtheorem{main conjecture}[theorem]{Main Conjecture}
\newtheorem{main theorem}[theorem]{Main Theorem}
\newtheorem{modesty proposition}[theorem]{Modesty Proposition}

\newtheorem{open problem}[theorem]{Open Problem}

\newtheorem{proposition}[theorem]{Proposition}

\newtheorem{remark}[theorem]{Remark}

\newtheorem{convergence lemma}[theorem]{Convergence Lemma}
\newtheorem{corrected lemma}[theorem]{Corrected Lemma}
\newtheorem{growth lemma}[theorem]{Growth Lemma}
\newtheorem{coefficient lemma}[theorem]{Integrality Lemma}
\newtheorem{interpolation lemma}[theorem]{Interpolation Lemma}
\newtheorem{kernel lemma}[theorem]{Kernel Lemma}
\newtheorem{limit lemma}[theorem]{Limit Lemma}
\newtheorem{tandem lemma}[theorem]{Modesty Lemma}
\newtheorem{zero-finding lemma}[theorem]{Zero-Finding Lemma}

\title[~]{On signed Mordell-Weil groups for abelian varieties. }
\author{Jishnu Ray}

\address[Ray]{Harish Chandra Research Institute, A CI of Homi Bhabha National Institute,  Chhatnag Road, Jhunsi, Prayagraj (Allahabad) 211 019 India}
\email{jishnuray@hri.res.in; jishnuray1992@gmail.com}

\keywords{Iwasawa theory,  plus and minus Mordell-Weil groups, plus and minus Tate-Shafarevich groups, supersingular abelian varieties.}
\subjclass[2020]{Primary: 11R23,  Secondary: 11G10, 11G05, 11R18. }
\begin{document}

\maketitle
\begin{abstract}
 In this short note, we work in the general framework of supersingular abelian varieties defined over $\Q$.  Using Coleman maps constructed by Büyükboduk--Lei, we define some  objects called ``the multi-signed Mordell-Weil groups" for supersingular abelian varieties, make comments on the structure of the dual of these groups as an Iwasawa module  and show a (weak) control theorem. This recovers the case of elliptic curves over $\Q$ non-ordinary at the prime $p$ with $a_p=0$ studied by Antonio Lei. Using the multi-signed Mordell-Weil groups we define what we call ``the multi-signed Tate-Shafarevich groups" along the cyclotomic tower of $\Q$. Finally we pose some open questions related to our newly defined objects and make a remark on the asymptotic growth of these  multi-signed Tate-Shafarevich groups along the cyclotomic tower using an idea of Meng Fai Lim.
\end{abstract}
\section{Introduction}
Let $p$ be an odd prime, $E$ be an elliptic curve over $\Q$ with supersingular reduction at $p$. Let $\Q_\infty$ be the cyclotomic $\Z_p$-extension of $\Q$ with Galois group $\Gamma$. Write $\Q_{(n)}$ for the unique sub-extension of degree $p^n$ with Galois group $\Gal(\Q_{(n)}/\Q)=\Gamma_n$. Let $\Lambda=\Z_p[[\Gamma]]$ be the Iwasawa algebra of $\Gamma$.

 Let $A$ be a $g$-dimensional abelian variety over $\Q$ (its  $p$-adic Tate module is of $\Z_p$-rank $2g$)   with good supersingular reduction at $p$ and let $A^\vee$ be the dual abelian variety. 
For any subset $\uI \in \{1,...,2g\}$ of cardinality $g$, using the theory of Coleman maps, we define the $\uI$-Mordell-Weil group $\calM_{\uI}(A^\vee/\Q_{(n)})$ and the $\uI$-Tate-Shafarevich group $\FSha_{\uI}(A^\vee/\Q_{(n)})$ (see Section \ref{sec:signed_MW}) which fits into the following exact sequence 
$$0 \rightarrow \calM_{\uI}(A^\vee/\Q_{(n)}) \rightarrow \Sel_{\uI}(A^\vee/\Q_{(n)}) \rightarrow \FSha_{\uI}(A^\vee/\Q_{(n)}) \rightarrow 0,$$
where the middle term is the $\uI$-Selmer group defined as in \cite[Section 1.6]{ponsinet}. 

The definitions of the $\uI$-Mordell-Weil group $\calM_{\uI}(A^\vee/\Q_\infty)$ and  $\Sel_{\uI}(A^\vee/\Q_\infty)$
(and hence that of the $\uI$-Tate-Shafarevich group $\FSha_{\uI}(A^\vee/\Q_\infty)$) depend on a choice of a Hodge-compatible basis of the associated Dieudonné module (see Definition \ref{Hodge-compatible}). 
We show that if a certain matrix  is block anti-diagonal the groups $\calM_{\uI}(A^\vee/\Q_\infty)$ and $\Sel_{\uI}(A^\vee/\Q_\infty)$ are canonically defined for two indexing sets, namely $\uI=\{1,...,g\}$ and $\uI=\{g+1,...,2g\}$ (see Proposition \ref{prop:canonicaldefn}). Hence the groups 
$\calM_{\uI}(A^\vee/\Q_{(n)})$ and $\FSha_{\uI}(A^\vee/\Q_{(n)})$ are also canonically defined.
When the abelian variety is an elliptic curve (i.e. $g=1$), then these groups exactly coincide with their canonically defined plus and minus variants, namely the groups $\calM^\pm(E/\Q_{(n)})$  and $\FSha^\pm(E/\Q_{(n)})$ define by Lei in \cite[definition 5.2]{Lei2023plus_minus_MW}.
Our goal in this short note is to first define these objects, reviewing results from the existing literature, and to prove a (weak) control theorem for  $\calM_{\uI}(A^\vee/\Q_{(n)})$  as $n$ grows (Theorem \ref{thm:I_MW}). We also make some remarks on the algebraic structure of the Pontryagin dual of $\calM_{\uI}(A^\vee/\Q_{\infty})$ (see Proposition \ref{thm:calM_I}). These are generalizations of results in \cite{Lei2023plus_minus_MW} given for elliptic curves. Finally, using an idea by Meng Fai Lim, we comment on the asymptotic growth of $\FSha_{\uI}(A^\vee/\Q_{(n)})$ as $n$ varies and indicate few
 open questions.

\bigbreak
\section*{Acknowledgement}
The author thanks Antonio Lei, Meng Fai Lim and Christian Wuthrich for several helpful conversations. He also thanks the referee for his/her valuable comments. This project is supported by the Inspire research grant, DST, Govt. of India. 
\bigbreak
\section{Signed Selmer groups over the cyclotomic tower}

Let $A$ be a $g$-dimensional abelian variety defined over $\Q$ with good supersingular reduction at $p$. Throughout the article we will need the following notations.

\begin{itemize}
    \item For $n \geq 1$, let $A[p^n]$ be the $p^n$-torsion points of $A(\overline{\Q})$ and let $$A[p^\infty]=\cup_nA[p^n].$$ \smallbreak
    \item  Let $T$ be the $p$-adic Tate module associated with the abelian variety $A$ with a continuous  action of the absolute Galois group $G_{\Q}$. It is a free $\Z_p$-module of rank $2g$.\smallbreak
    \item Let $V=T \otimes_{\Z_p} \Q_p$, $V$ is a crystalline $G_{\Q_p}$-representation with Hodge-Tate weights $0$ and $1$, both with multiplicity $g$.\smallbreak
    \item For $i \geq 0$, the Iwasawa cohomology group $H^i_{\mathrm{Iw},\mathrm{cyc}}(\Q_p,T)$ is defined as the projective limit of $H^i(\Q(\mu_{p^n})_{v_n},T)$ relative to the corestriction maps. Here $v_n$ is the unique prime of $\Q(\mu_{p^n})$ above $p$.\smallbreak
    \item The Dieudonné module $\Dcrisp(T)$ associated to $T$ (see \cite{berger04})  is a free $\Z_p$-module of rank $2g$.\smallbreak
     \item As mentioned in the introduction, $A^\vee$ will be the dual abelian variety and for a $\Z_p$-module $M$,  $M^\land$ will be  the Pontryagin dual $\Hom(M,\Q_p/\Z_p)$.
\end{itemize}
 \smallbreak
 The module $\Dcrisp(T)$ is equipped with a Frobenius after tensoring with $\Q_p$ and a filtration of $\Z_p$-modules $(\Fil^i\Dcrisp(T))_{i \in \Z}$ satisfying
$$
\Fil^i \mathbb{D}_{\cris,p}(T) = \begin{cases} 0 & \text{if $i \geq 1$,} \\ \mathbb{D}_{\cris,p}(T) & \text{if $i \leq -1$.} \end{cases}
$$
\begin{definition}\label{Hodge-compatible}
A $\Z_p$-basis $\{u_1,\ldots,u_{2g} \}$ of $\mathbb{D}_{\cris,p}(T)$ is called Hodge-compatible if 
 $\{ u_1,\ldots, u_{g} \}$ is a $\Z_p$-basis of $\Fil^0\mathbb{D}_{\cris,p}(T)$. 
\end{definition}
We mention that such a choice of basis of $\mathbb{D}_{\cris,p}(T)$ can always be made and we choose one such basis once and for all.
The matrix of the Frobenius $\varphi$ with respect to this basis is of the form
$$
C_{\varphi,p} = C_p\left[
\begin{array}{c|c}
I_{g} & 0 \\
\hline
0 & \frac{1}{p} I_{g}
\end{array}
\right]
$$
for some $C_p \in \mathrm{GL}_{2g}(\Z_p)$ and  the identity $g \times g$ matrix $I_g$.
Following \cite[Definition 2.4]{BL17}, for $n \geq 1$, we can define
$$
C_{p,n} := \left[
\begin{array}{c|c}
I_{g} & 0 \\
\hline
0 & \Phi_{n}(1+X) I_{g}
\end{array}
\right]
C_p^{-1}
 \text{ and } M_{p,n}:= (C_{\varphi,p})^{n+1}C_{p,n} \cdots C_{p,1}, \hfill
$$
where $\Phi_{n}(1+X)$ is the $p^n-$th cyclotomic polynomial in $1+X.$
Recall that as $p$ is odd, $$\Gal(\Q(\mu_{p^\infty})/\Q) \cong \Delta \times \Gamma$$
where $\Delta$ is a finite group of order $p-1$.
Set $\calH=\Q_p[\Delta]\otimes_{\Q_p} \calH(\Gamma)$ where $\calH(\Gamma)$ is the set of elements $f(\gamma-1)$ with $\gamma \in \Gamma$ and $f(X)\in \Q_p[[X]]$ is convergent on the $p$-adic open unit disk. Let $$\calL_{T,p}: \HIwcyc(\Q_p,T) \rightarrow \calH \otimes \Dcrisp(T)$$
be the Perrin-Riou's big logarithm map (see \cite[Definition 3.4]{LLZ0.5}). It ``interpolates'' Kato's dual exponential maps \cite[II, \S 1.2]{Kato93_approach}
$$\mathrm{exp}^*_{p,n}: H^1(\Q_p(\mu_{p^n}),T) \rightarrow \Q_p(\mu_{p^n}) \otimes \Fil^0\Dcrisp(T)$$ in the following way. Let  $T^*$ be the $\Z_p$-linear dual of $T$
and consider the natural pairing 
$$[\sim, \sim]: \mathbb{D}_{\cris,p}(T)\times \mathbb{D}_{\cris,p}(T^*(1)) \rightarrow \Z_p$$ which can be extended linearly to $$\Q_p(\mu_{p^n}) \otimes_{\Z_p} \mathbb{D}_{\cris,p}(T) \times \Q_p(\mu_{p^n})  \otimes_{\Z_p} \mathbb{D}_{\cris,p}(T^*(1)) \rightarrow \Q_p(\mu_{p^n}).$$

Choose a $\Z_p$-basis $\{u_1,...,u_{2g}\}$ of $\mathbb{D}_{\cris,p}(T) $ and let $\{u_1^\prime,...,u_{2g}^\prime\}$ be the dual basis of $\mathbb{D}_{\cris,p}(T^*(1)) $. For $i \in \{1,...,2g\}$, we write $\mathcal{L}_{T,p}^i:\HIwcyc(\Q_p,T) \rightarrow \mathcal{H}$ for the map obtained by composing $\mathcal{L}_{T,p}$ and the projection of $\mathcal{H} \otimes \mathbb{D}_{\cris,p}(T)$ to the $u_i^{th}$-component. Now let $\theta$ be a Dirichlet character of conductor $p^n$ and $G_n=\Gal(\Q_p(\mu_{p^n})/\Q_p)$. Then 
\begin{equation}
 \theta(\mathcal{L}_{T,p}^i(z)) =
    \begin{cases}
      \left[\mathrm{exp}_{p,0}^*(z),(1-p^{-1}\varphi^{-1})(1-\varphi)^{-1}u_i^\prime\right] & \text{if $n=0$,}\\
      \tau(\theta^{-1})^{-1}\left[\sum_{\sigma \in G_n} \theta^{-1}(\sigma)\mathrm{exp}_{p,n}^*(z^\sigma),\varphi^{-n}(u_i^\prime)\right] & \text{otherwise}
    \end{cases}  
\end{equation}
where $\tau(\theta^{-1})$ denotes the Gauss sum of $\theta^{-1}$ (cf. \cite[p. 366]{BL17}).

In \cite[Theorem 1.1]{BL17}, Büyükboduk--Lei showed that the big logarithm map decomposes into Coleman maps in the following way.
$$
\mathcal{L}_{T,p}^{\infty} = (u_1,\ldots, u_{2g}) \cdot M_{T,p} \cdot \begin{bmatrix} \col_{T,p, 1} \\ \vdots \\ \col_{T,p,2g}\end{bmatrix}
,$$
where $M_{T,p}=\underset{n \rightarrow \infty}{\lim} M_{p,n}$ is a $2g \times 2g$  logarithmic matrix defined over $\calH$  and for $i \in \{1,...,2g\}$, the maps $\col_{T,p, i}, $ are $\Z_p[\Delta][[\Gamma]]$-homomorphisms from $\HIwcyc(\Q_p,T) \rightarrow \Z_p[\Delta][[\Gamma]].$
For any subset $I_p$ of $\{1,...,2g\}$, one can define
\begin{align*}
    \col_{T,I_p}: \HIwcyc(\Q_p,T) &\rightarrow \prod_{i=1}^{|I_p|}\Z_p[\Delta][[\Gamma]],\\
    {z} &\mapsto (\col_{T,p,i}({z}))_{i \in I_p}.
\end{align*}
Let $\calI$ be the set of all subsets of $\{1,...,2g\}$ of cardinality $g$.  So $\uI\in \calI$ will be a subset of $\{1,...,2g\}$ having cardinality $g$. Given any such $\uI \in \calI$  and $\mathbf{z}=z_1 \wedge \cdots \wedge z_{g} \in \bigwedge^g \HIwcyc(\Q_p,T),$ one can define 
$$\col_{T,\uI}(\mathbf{z})=\det(\col_{T,p,i}(z_j))_{i \in \uI,1 \leq j \leq g}.$$
For all $n \geq 1$, let $H_{p,n}=C_{p,n}\cdots C_{p,1}$.  For a pair $\uI$ and $\uJ$ in $\calI$, we define $H_{\uI,\uJ,n}$ as the $(\uI,\uJ)$-minor of $H_{p,n}$.

Since the prime $p$ is totally ramified in $\Q(\mu_{p^\infty})$, we will also use it to  denote the unique prime of $\Q(\mu_{p^\infty})$ above $p$. Let $I_p$ be a subset of of $\{1,...,2g\}$. One defines the submodule $$H^1_{I_p}(\Q(\mu_{p^\infty})_p, A^\vee[p^\infty]) \subset H^1(\Q(\mu_{p^\infty})_p, A^\vee[p^\infty])$$ as the orthogonal 
complement of $\ker(\col_{T,I_p})$ under the Tate's local pairing
$$H^1(\Q(\mu_{p^\infty})_p, A^\vee[p^\infty]) \times \HIwcyc(\Q_p,T) \rightarrow \Q_p/\Z_p.$$
By the inflation-restriction exact sequence it is possible to show that the restriction map 
$$H^1(\Q_{\infty,p}, A^\vee[p^\infty]) \rightarrow  H^1(\Q(\mu_{p^\infty})_p, A^\vee[p^\infty])^\Delta$$ is an isomorphism; because the order of $\Delta$ is $p-1$ and $A^\vee[p^\infty](\Q(\mu_{p^\infty})_p)$ is a finite $p$-group. Using this isomorphism one defines $$H^1_{I_p}(\Q_{\infty,p},A^\vee[p^\infty]) \subset H^1(\Q_{\infty,p},A^\vee[p^\infty]) $$
as $$H^1_{I_p}(\Q_{\infty,p},A^\vee[p^\infty])  = H^1_{I_p}(\Q(\mu_{p^\infty})_p,A^\vee[p^\infty])^\Delta. $$
\begin{definition}
    For $\uI =(I_p) \in \calI$, the $\uI$-Selmer group of $A^\vee$ over $\Q_\infty$ is defined by 
    $$\Sel_{\uI}(A^\vee/\Q_{\infty})=\ker\Big(H^1(\Q_{\infty}, A^\vee[p^\infty])  \rightarrow \prod_{v \nmid p}{H^1(\Q_{{\infty},v},A^\vee[p^\infty])} \times \frac{H^1(\Q_{{\infty},p},A^\vee[p^\infty])}{H^1_{I_p}(\Q_{{\infty},p},A^\vee[p^\infty])}\Big). $$
\end{definition}
\begin{remark}
    This definition of multi-signed Selmer groups can be generalized for any abelian variety over a general number field $F$, unramified at $p$, with good supersingular reduction at every prime $v$ of $F$ dividing $p$. In this case, 
    the multi-signed Selmer groups are defined for $\underline{I}=(I_v)_{v \mid p}$ where $I_v \subset \{1,...,2g[F_v:\Q_p]\}$. This is the notation used in page 5 of \cite{LP20}. In our case, since the base field is $\Q$, we have only one prime $p$ and to be consistent with the existing notation, we have written $\uI =(I_p)$. In our case $\uI$ and $I_p$ are the same.
\end{remark}

\section{Signed Mordell-Weil group and signed Tate-Shafarevich group}\label{sec:signed_MW}
In this section, we define the signed Mordell-Weil group and the signed Tate-Shafarevich group generalizing \cite{Lei2023plus_minus_MW}. 
\begin{definition}
The $\uI$-Mordell-Weil group of $A^\vee$ over the cyclotomic $\Z_p$-extension $\Q_\infty$ is defined by
$$\calM_{\uI}(A^\vee/\Q_\infty)=\ker\Big(A^\vee(\Q_\infty) \otimes \Q_p/\Z_p \rightarrow \frac{H^1(\Q_{\infty,p},A^\vee[p^\infty])}{H^1_{I_p}(\Q_{\infty,p},A^\vee[p^\infty])}\Big).$$
\end{definition}

Now, let us define the $\uI$-Mordell-Weil group at the level $\Q_{(n)}$, for which we need to define the local condition at $p$. 
Since $A^\vee(\Q_\infty)[p^\infty]=0$ (see \cite[Lemma 1.1]{LP20})
there is an identification 
$$H^1(\Q_{(n),p}, A^\vee[p^\infty]) \cong H^1(\Q_{\infty,p},A^\vee[p^\infty])^{\Gamma_n}.$$ For $n\geq 0$, we define $$H^1_{I_p}(\Q_{(n),p},A^\vee[p^\infty]):=H^1_{I_p}(\Q_{\infty,p},A^\vee[p^\infty])^{\Gamma_n}$$
and under the above identification this may be viewed as a subgroup of $H^1(\Q_{(n),p}, A^\vee[p^\infty])$.
As mentioned before, this definition is in coherence with \cite[Section 1.6]{ponsinet} and \cite[Definition 5.2]{Lei2023plus_minus_MW}. 
\begin{definition}
We define the $\uI$-Mordell-Weil group and the $\uI$-Selmer group over $\Q_{(n)}$ as 
\begin{align*}
    \calM_{\uI}(A^\vee/\Q_{(n)})&=\ker\Big(A^\vee(\Q_{(n)}) \otimes \Q_p/\Z_p \rightarrow \frac{H^1(\Q_{{(n)},p},A^\vee[p^\infty])}{H^1_{I_p}(\Q_{{(n)},p},A^\vee[p^\infty])}\Big),\\
    \Sel_{\uI}(A^\vee/\Q_{(n)})&=\ker\Big(H^1(\Q_{(n)}, A^\vee[p^\infty])  \rightarrow \prod_{v \nmid p}{H^1(\Q_{{(n)},v},A^\vee[p^\infty])} \times \frac{H^1(\Q_{{(n)},p},A^\vee[p^\infty])}{H^1_{I_p}(\Q_{{(n)},p},A^\vee[p^\infty])}\Big).    \end{align*}
\end{definition}
\begin{remark}\label{rem:imp}
When $A$ is an elliptic curves over $\Q$ with $a_p=0$, for $\uI=\{1\}$ and $\uI=\{2\}$, these definitions of $\Sel_{\uI}(A^\vee/\Q_{\infty})$ coincide with Kobayashi's plus and minus Selmer groups (see \cite[Appendix A]{BL17}). However, the definitions of $ \Sel_{\uI}(A^\vee/\Q_{(n)})$ does not coincide with Kobayashi's plus and minus Selmer groups \cite{kobayashi03} defined using $E^\pm(\Q_{(n),p}) \otimes \Q_p/\Z_p$ (see \cite[Remark 5.4]{Lei2023plus_minus_MW}). 
\end{remark}

\noindent \textbf{Open question.}  In \cite{kobayashi03}, Kobayashi could explicitly 
describe the local conditions $E^\pm(\Q_{(n),p})$ using trace maps. It is unknown whether one can write $H^1_{I_p}(\Q_{{(n)},p},A^\vee[p^\infty])=A^{\vee, \uI}(\Q_{{(n)},p}) \otimes \Q_p/\Z_p$ for some well-defined $A^{\vee, \uI}$.

\vspace{.2cm}

 \noindent Via the Kummer map, we  identify $A^\vee(\Q_{(n)})$ $\otimes$ $ \Q_p/\Z_p$ with a subgroup of  $H^1(\Q_{(n)}, A^\vee[p^\infty])$ and see that $\calM_{\uI}(A^\vee/\Q_{(n)})$ is a subgroup of $\Sel_{\uI}(A^\vee/\Q_{(n)})$. This allows us to define the $\uI$-Tate-Shafarevich group as follows.
 \begin{definition}
     Let $K=\Q_{(n)}$ or $\Q_\infty$. The ($p$-primary) $\uI$-Tate Shafarevich group ${\FSha}_{\uI}(A^\vee/K)$ is defined as the quotient $${\FSha}_{\uI}(A^\vee/K):=\Sel_{\uI}(A^\vee/K)/\calM_{\uI}(A^\vee/K).$$
 \end{definition}
 By definition, we have an exact sequence 
 $$0\rightarrow \calM_{\uI}(A^\vee/K) \rightarrow \Sel_{\uI}(A^\vee/K) \rightarrow {\FSha}_{\uI}(A^\vee/K) \rightarrow 0.$$

 \begin{theorem}\label{thm:I_MW}
 \begin{enumerate}[(i)]
     \item The map $$\Sel_{\uI}(A^\vee/\Q_{(n)}) \rightarrow \Sel_{\uI}(A^\vee/\Q_\infty)^{\Gamma_n}$$ is injective with bounded cokernel as $n$ varies.
     \item  The map $$m_{n,\uI}:\calM_{\uI}(A^\vee/\Q_{(n)}) \rightarrow \calM_{\uI}(A^\vee/\Q_\infty)^{\Gamma_n}$$
 is injective {and if ${\FSha}_{\uI}(A^\vee/\Q_{(n)})$ is finite}, then the map $m_{n,\uI}$ has finite cokernel. 
 \end{enumerate}

\end{theorem}
\begin{proof}
  The proof of part (i) essentially follows from the control theorem for fine Selmer group for places not above $p$ and the analysis of the local conditions at $p$. For completeness we reproduce the proof.  
    We consider the following commutative diagram.
{\small
\begin{equation}\label{diag: Sel_diagram1}
	\begin{tikzcd}
	0 \arrow[r] & \Sel_{\uI}(A^\vee/\Q_{(n)}) \arrow[d, "\alpha_{(n)}"] \arrow[r] &  H^1(\Q_{(n)}, A^\vee[p^\infty])  \arrow[d, "\beta_{(n)}"]  \arrow[r, "\theta_{(n)}"]  &\prod_{v \nmid p}{H^1(\Q_{{(n)},v},A^\vee[p^\infty])} \times \frac{H^1(\Q_{{(n)},p},A^\vee[p^\infty])}{H^1_{I_p}(\Q_{{(n)},p},A^\vee[p^\infty])} \arrow[d, "\delta_{(n)}=\prod_v\delta_{{(n)},v}"] \\
		0 \arrow[r] & \Sel_{\uI}(A^\vee/\Q_{\infty})^{\Gamma_n} \arrow[r] & H^1(\Q_{\infty}, A^\vee[p^\infty])^{\Gamma_n} \arrow[r, "\lambda_{\infty}^{\Gamma_n}"] &  \prod_{w \nmid p}{H^1(\Q_{{\infty},w},A^\vee[p^\infty])}^{\Gamma_n} \times \Big(\frac{H^1(\Q_{{\infty},p},A^\vee[p^\infty])}{H^1_{I_p}(\Q_{{\infty},p},A^\vee[p^\infty])}\Big)^{\Gamma_n}. 
	\end{tikzcd}
	\end{equation}    
 }
 It follows from \cite[Lemma 1.1]{LP20} that $A^\vee(\Q_\infty)[p^\infty]$ is trivial  and hence the inflation-restriction exact sequence gives that the central vertical map $\beta_{(n)}$ is an isomorphism.

For non-archimedean primes $v \nmid p$, the kernel of the map $$H^1(\Q_{{(n)},v},A^\vee[p^\infty]) \xrightarrow{\delta_{(n),v}}{H^1(\Q_{{\infty},w},A^\vee[p^\infty])}^{\Gamma_n}$$ is 
$H^1(\Gamma_n,A^\vee(\Q_{\infty,w})[p^\infty])\cong (A^\vee(\Q_{\infty,w})[p^\infty])_{\Gamma_n}.$ This is
finite and bounded by $$[A^\vee(\Q_{\infty,w})[p^\infty]: (A^\vee(\Q_{\infty,w})[p^\infty])_{\mathrm{div}}]$$ which is independent of $n$ (see \cite[Proof of Lemma 2.3]{LP20}). 
If  $A^\vee$ has good reduction at $v$, then $A^\vee(\Q_{\infty,w})[p^\infty]$ is divisible and hence $\ker(\delta_{(n),v})$ is trivial. Furthermore, there are finitely many non-archimedean primes where $A^\vee$ has bad reduction, and for each of them $\ker(\delta_{(n),v})$ is bounded and the bound is independent of $n$.

If $v$ is archimedean, $v$ splits completely in $\Q_\infty/\Q$ and hence $\ker(\delta_{(n),v})$ is trivial.
We are left to analyze the kernel of the map $\delta_{(n),p}$. Consider the diagram
{
\begin{equation*}
	\begin{tikzcd}
	0 \arrow[r] & H^1_{I_p}(\Q_{{(n)},p},A^\vee[p^\infty]) \arrow[d] \arrow[r] &  H^1(\Q_{{(n)},p},A^\vee[p^\infty])  \arrow[d]  \arrow[r]  & \frac{H^1(\Q_{{(n)},p},A^\vee[p^\infty])}{H^1_{I_p}(\Q_{{(n)},p},A^\vee[p^\infty])} \arrow[r] \arrow[d, "\delta_{(n),p}"] & 0 \\
		0 \arrow[r] & H^1_{I_p}(\Q_{{\infty},p},A^\vee[p^\infty])^{\Gamma_n} \arrow[r] & H^1(\Q_{{\infty},p},A^\vee[p^\infty])^{\Gamma_n} \arrow[r] &  \Big(\frac{H^1(\Q_{{\infty},p},A^\vee[p^\infty])}{H^1_{I_p}(\Q_{{\infty},p},A^\vee[p^\infty])}\Big)^{\Gamma_n}. 
	\end{tikzcd}
	\end{equation*}    
 }

By the definition of the local condition $H^1_{I_p}(\Q_{{(n)},p},A^\vee[p^\infty])$, the left vertical map is an isomorphism. The central vertical map has kernel $$H^1(\Gamma_n, A^\vee(\Q_{\infty,p})[p^\infty])$$
which is trivial since the group $A^\vee(\Q_{\infty,p})$ has no $p$-torsion (see  \cite[Lemma 1.1]{LP20}). Therefore, by the snake lemma the kernel of $\delta_{(n),p}$ is trivial.

Part (ii) follows by applying snake lemma to the following diagram and using part (i).

\begin{equation*}
	\begin{tikzcd}
	0 \arrow[r] & \calM_{\uI}(A^\vee/\Q_{(n)})\arrow[d] \arrow[r] &  \Sel_{\uI}(A^\vee/\Q_{(n)}) \arrow[d]  \arrow[r]  & \FSha_{\uI}(A^\vee/\Q_{(n)})\arrow[r] \arrow[d] & 0 \\
		0 \arrow[r] & \calM_{\uI}(A^\vee/\Q_{\infty})^{\Gamma_n} \arrow[r] & \Sel_{\uI}(A^\vee/\Q_{\infty})^{\Gamma_n} \arrow[r] &  \FSha_{\uI}(A^\vee/\Q_{\infty})^{\Gamma_n}. 
	\end{tikzcd}
	\end{equation*}   

\end{proof}

Let us identify $\Z_p[[\Gamma]]$ with $\Z_p[[X]]$ via $X=\gamma -1$ where $\gamma$ is a topological generator of the Galois group $\Gamma$. Define $\Phi_0=X$ and for $n \geq 1$, we write $\Phi_n=\frac{(X+1)^{p^n}-1}{(X+1)^{p^{n-1}}-1}$ 
for the $p^n-$th cyclotomic polynomial in $X+1$. Let $\omega_n=(X+1)^{p^n}-1$.
It follows from \cite[Theorem 2.1.2]{Lee20} that there is a pseudo-isomorphism 
\begin{equation}\label{eq:Lee}
(A^\vee(\Q_\infty) \otimes_{\Z_p} \Q_p/\Z_p)^{\land} \sim \Lambda^r \oplus \Big(\bigoplus_{i=1}^t\frac{\Lambda}{\Phi_{b_i}}\Big),
\end{equation}
for certain non-negative integers $r,t$ and $b_i$. 
\begin{proposition}\label{thm:calM_I}
  If $r=0$, then there is a pseudo-isomorphism of $\Lambda$-modules
  $$\calM_{\uI}(A^\vee/\Q_\infty)^{\land} \sim  \bigoplus_{i=1}^{u_{\uI}}\frac{\Lambda}{\Phi_{c_{i,\uI}}} $$
  for some non-negative integers $u_{\uI}$ and $c_{i,\uI}$. 
\end{proposition}
\begin{proof}
    By definition, the Pontryagin dual of $\calM_{\uI}(A^\vee/\Q_\infty)$ is a quotient of the dual of $A^\vee(\Q_\infty) \otimes_{\Z_p} \Q_p/\Z_p$. If $r=0$, the right-hand side of \eqref{eq:Lee} is a direct sum of all simple modules. Hence it follows that $\calM_{\uI}(A^\vee/\Q_\infty)^{\land}$ is pseudo-isomorphic to a partial direct sum of the right-hand side of \eqref{eq:Lee}.
\end{proof}
Let $\Sigma$ be a finite set of primes of $\Q$ containing the prime $p$, the archimedean prime and the primes of bad reduction of $A^\vee$. Let $\Q_{\Sigma}$ be the maximal extension of $\Q$ unramified outside $\Sigma$. For $i \geq 0$, let $H^i_{\mathrm{Iw},\Sigma}(\Q,T)$ be the projective limit of the groups $H^i(\Q_{\Sigma}/\Q_{(n)},T)$ relative to the corestriction maps. We assume the following hypothesis.

\vspace{.2cm}

\noindent \textbf{(Torsion)} For all $\uI \in \calI$, the $\Lambda$-module $\Sel_{\uI}(A^\vee/\Q_\infty)$ is cotorsion.
\vspace{.2cm}

This is known to be true for elliptic curves over $\Q$ with non-ordinary reduction at $p$ with $a_p=0$ (see \cite{kobayashi03}).

Under the assumption \textbf{(Torsion)}, $H^1_{\mathrm{Iw},\Sigma}(\Q,T)$ is a $\Z_p[[\Gamma]]$-module of rank $g$ (see \cite[Lemma 2.4]{LP20}). Hence we can fix a family of classes $c_1,...,c_g \in H^1_{\mathrm{Iw},\Sigma}(\Q,T)$ such that $$\frac{H^1_{\mathrm{Iw},\Sigma}(\Q,T)}{\langle c_1,...,c_g \rangle} \text{ is } \Z_p[[\Gamma]]\text{-torsion}.$$
Let $J_p$ be a subset of $\{1,...,2g\}$ of cardinality $g$. The composition map
$$H^1_{\mathrm{Iw},\Sigma}(\Q,T) \xrightarrow{\mathrm{loc}_p} \HIwcyc(\Q_p,T)^\Delta \xrightarrow{\col_{T,J_p}} \prod_{k=1}^g \Z_p[[\Gamma]]$$
is a $\Z_p[[\Gamma]]$-homomorphism between two $\Z_p[[\Gamma]]$-modules of rank $g$. For $\uJ=(J_p)$, we write
$$\col_{T,\uJ}(\mathbf{c})=\det(\col_{T,J_p} \circ  \text{ } \mathrm{loc}_p(c_i))_{1 \leq i \leq g}.$$

Under the assumption that the Selmer group $\Sel_{\uJ}(A^\vee/\Q_\infty)$ is $\Z_p[[\Gamma]]$-cotorsion, Lei--Ponsinet showed that $\col_{T,\uJ}(\mathbf{c})\neq 0$ (see  \cite[Lemma 3.2]{LP20}).

\subsection{Change of basis}

	The definitions of $\calM_{\uI}(A^\vee/\Q_\infty)$ and $\Sel_{\uI}(A^\vee/\Q_\infty)$ (and hence that of $\FSha_{\uI}(A^\vee/\Q_\infty)$) depend on the choice of a Hodge-compatible basis and hence are not canonically defined. Under certain additional assumptions, one can define them canonically for certain choices of the indexing set $\uI$.
Let  $\uI_1=\{g+1,...,2g\}$ and recall that $\uI_0=\{1,...,g\}$. 
\begin{proposition}\label{prop:canonicaldefn}
	Suppose that $C_p$ is a block anti-diagonal matrix with respect to the basis $\mathcal{B}_p$. For $\uI=\uI_0$ or $ \uI_1$, the groups $\calM_{\uI}(A^\vee/\Q_\infty)$, $\Sel_{\uI}(A^\vee/\Q_\infty)$ and $\FSha_{\uI}(A^\vee/\Q_\infty)$ are canonically defined.
\end{proposition}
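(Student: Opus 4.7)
The plan is to reduce canonicality of $\Sel_{\uI}(A^\vee/\Q_\infty)$, $\calM_{\uI}(A^\vee/\Q_\infty)$ and $\FSha_{\uI}(A^\vee/\Q_\infty)$ to basis-independence of the local condition $H^1_{I_p}(\Q_{\infty,p}, A^\vee[p^\infty])$ at $p$. All other data entering the definitions (the global Galois cohomology, the Mordell--Weil group $A^\vee(\Q_\infty)$, and the local conditions at places $v \nmid p$) are intrinsic to $A^\vee$ and do not depend on the Hodge-compatible basis. The local condition at $p$ is the Tate-pairing orthogonal complement of $\ker(\col_{T, I_p})$, so it suffices to show that this kernel is unchanged when a Hodge-compatible basis $\mathcal{B}_p$ is replaced by another Hodge-compatible basis $\mathcal{B}_p^\prime$, for $\uI = \uI_0$ or $\uI_1$.

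First I would apply Lemma \ref{block anti-diagonal}: the block anti-diagonality of $C_p$ implies that any Hodge-compatible basis $\mathcal{B}_p^\prime$ in the sense of Definition \ref{Hodge-compatible} is automatically Hodge-compatible with $\mathcal{B}_p$ in the stronger sense of Definition \ref{def:HC}. Consequently the transition matrix $P$ with $(w_1, \ldots, w_{2g}) = (u_1, \ldots, u_{2g}) P$ is block diagonal, $P = \mathrm{diag}(P_1, P_2)$ with $P_1, P_2 \in \GL_g(\Z_p)$: $P_1$ is the change of basis within $\Fil^0 \Dcrisp(T)$, and $P_2$ within the canonical complement $N_p = \varphi(\Fil^0 \Dcrisp(T))$.

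Next I would track how the Büyükboduk--Lei decomposition transforms under this change of basis. Because $P$ is block diagonal of shape $(g,g)$, it commutes with both $\mathrm{diag}(I_g, p^{-1} I_g)$ and $\mathrm{diag}(I_g, \Phi_{p^n}(1+X) I_g)$. A direct calculation then shows that $C_{\varphi,p}$ and each $C_{p,n}$ transform by conjugation by $P$, hence so does their product, giving $M'_{T,p} = P^{-1} M_{T,p} P$. Since $\mathcal{L}_{T,p}$ is itself canonical, writing its decomposition in both bases and matching leads to the identity $M_{T,p} (\col_{T,p,i})^T = M_{T,p} P (\col'_{T,p,i})^T$, and hence, by the uniqueness built into the decomposition, $(\col'_{T,p,i})^T = P^{-1} (\col_{T,p,i})^T$. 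Block diagonality of $P^{-1}$ then shows that for $i \in \uI_0$ the new Coleman map $\col'_{T,p,i}$ is a $\Z_p$-linear combination, with invertible coefficient matrix $P_1^{-1}$, of the old Coleman maps indexed by $\uI_0$, and symmetrically for $\uI_1$. One concludes $\ker(\col'_{T,\uI_0}) = \ker(\col_{T,\uI_0})$ and $\ker(\col'_{T,\uI_1}) = \ker(\col_{T,\uI_1})$, whence the local condition at $p$ is canonical and the three groups in the proposition are as well.

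The step I expect to be the main subtlety is the passage from $M_{T,p}(\col_{T,p,i})^T = M_{T,p} P (\col'_{T,p,i})^T$ to $(\col'_{T,p,i})^T = P^{-1} (\col_{T,p,i})^T$: since $M_{T,p}$ is a matrix over the distribution algebra $\calH$ that is invertible only after inverting $p$, this cancellation should be handled by appealing to the uniqueness part of the Büyükboduk--Lei decomposition theorem in \cite{BL17}, rather than by literal matrix inversion. I note finally that the argument genuinely uses the restriction $\uI \in \{\uI_0, \uI_1\}$: for any other $\uI$ the linear combinations prescribed by $P_1^{-1}$ and $P_2^{-1}$ would intertwine Coleman maps indexed by the two different blocks, and the kernel $\ker(\col_{T, \uI})$ would in general depend on the basis.
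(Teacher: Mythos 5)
Your proposal is correct and follows essentially the same route as the paper's proof: reduce to basis-independence of $\ker(\col_{T,I_p})$, invoke Lemma~\ref{block anti-diagonal} to force the transition matrix to be block diagonal, and conclude that for $\uI\in\{\uI_0,\uI_1\}$ the two sets of Coleman maps differ by an invertible $g\times g$ block, hence have the same kernel.

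The one genuine divergence is local: where the paper simply cites \cite[Lemma 2.16]{BL17} for the transformation law $\col^{\mathcal{B}_p^\prime}_{T,p}=B_p\cdot\col^{\mathcal{B}_p}_{T,p}$, you re-derive it by tracking how $C_{\varphi,p}$, $C_{p,n}$ and hence $M_{T,p}$ transform by conjugation under the block-diagonal change of basis, then matching the two decompositions of the canonical map $\mathcal{L}_{T,p}$. That route is sound, and your computation $M'_{T,p}=P^{-1}M_{T,p}P$ is correct since $P$ commutes with $\mathrm{diag}(I_g,p^{-1}I_g)$ and $\mathrm{diag}(I_g,\Phi_{p^n}(1+X)I_g)$. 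You rightly flag that the cancellation of $M_{T,p}$ is the delicate step, since $M_{T,p}$ is not invertible over $\Lambda$; the paper sidesteps this entirely by citing the already-proven transformation lemma, which is the cleaner move. Your route buys self-containment at the cost of needing the (true, but not entirely trivial) fact that the Coleman maps in the Büyükboduk--Lei decomposition are uniquely determined; if you keep the re-derivation you should justify that uniqueness (e.g.\ by the injectivity properties of $M_{T,p}$ as a map $\Lambda^{2g}\to\mathcal{H}^{2g}$), or else just cite \cite[Lemma 2.16]{BL17} as the paper does. Your closing remark that the argument genuinely uses $\uI\in\{\uI_0,\uI_1\}$, since other index sets would mix the two blocks, is correct and worth retaining.
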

\begin{proof}

	Let $\mathcal{B}_p=\{u_1,\ldots,u_{2g}\}$ and  $\mathcal{B}_p^\prime=\{w_1,\ldots,w_{2g}\}$ be a pair of Hodge-compatible bases of $\mathbb{D}_{\cris,p}(T)$. Let $B_p$ be the change of basis matrix from $\mathcal{B}_p^\prime$ to $\mathcal{B}_p$. This implies that  $B_p$ is  block diagonal (see \cite[p. 371]{BL17}). Write
	$$
	B_p = \left[ \begin{array}{c|c}
	B_{1,1} & 0 \\
	\hline
	0 & B_{2,2}
	\end{array} \right]
	$$
	where $B_{1,1},B_{2,2} \in \GL_g(\Z_p)$. Let $\col^{\mathcal{B}_p}_{T,p,i}$ be the $i$-th Coleman map with respect to the basis $\mathcal{B}_p$. Let $\col^{\mathcal{B}_p}_{T,p}$ denote the column vector of Coleman maps $( \col^{\mathcal{B}_p}_{T,p,i} )_{i=1}^{2g}$. Similarly define $\col^{\mathcal{B}_p^\prime}_{T,p}$ as the column vector of Coleman maps defined with respect to the basis $\mathcal{B}_p^\prime$. An argument analogous to \cite[Proposition 4.18]{DionRay}  gives
	$
	\col^{\mathcal{B}_p^\prime}_{T,p}= B_p\cdot  \col^{\mathcal{B}_p}_{T,p}.
	$
	Thus,
	$$
	\begin{bmatrix} \col_{T,p,1}^{\mathcal{B}_p^\prime} \\ \vdots \\ \col_{T,p,g}^{\mathcal{B}_p^\prime} \end{bmatrix}=B_{1,1} \begin{bmatrix} \col_{T,p,1}^{\mathcal{B}_p} \\ \vdots \\ \col_{T,p,g}^{\mathcal{B}_p} \end{bmatrix}, \quad \begin{bmatrix} \col_{T,p,{g+1}}^{\mathcal{B}_p^\prime} \\ \vdots \\ \col_{T,p,{2g}}^{\mathcal{B}_p^\prime} \end{bmatrix}=B_{2,2} \begin{bmatrix} \col_{T,p,{g+1}}^{\mathcal{B}_p} \\ \vdots \\ \col_{T,p,{2g}}^{\mathcal{B}_p} \end{bmatrix}.
	$$
	 One obtains that $(\col_{T,p,i}^{\mathcal{B}_p^\prime}(z))_{i=1}^g=0$ if and only if $(\col_{T,p,i}^{\mathcal{B}_p}(z))_{i=1}^g=0$ since the matrix $B_{1,1}$ is invertible. The same argument also works  for $(\col_{T,p,i}^{\mathcal{B}_p^\prime}(z))_{i=g+1}^{2g}$ and $(\col_{T,p,i}^{\mathcal{B}_p}(z))_{i=g+1}^{2g}$. Therefore $\ker \col_{T,I_p}$ is independent of the choice of basis if $I_p=\{1,...,g\}$ or $\{g+1,...,2g\}$. 
\end{proof} 
\medskip

	\subsection{Remarks and examples}\label{sec:examples}
		~
		\begin{itemize}
			\item Note that $C_p$ will remain block anti-diagonal upon a change of basis. This is because $B_p C_{p}B_{p}^{-1}$ is block anti-diagonal if $B_p$ is block diagonal and $C_p$ is block anti-diagonal. \smallbreak
   \item In the case of elliptic curves with $a_p=0$, note that Kobayashi's signed Selmer groups $\Sel_p^\pm(E/\Q_\infty)$ are canonically defined. These groups correspond to $\Sel_{\uI_0}(A^\vee/\Q_\infty)$ and $\Sel_{\uI_1}(A^\vee/\Q_\infty)$ for $g=1$ (see \cite[Appendix 4]{BL17}). Hence Proposition \ref{prop:canonicaldefn} should be seen as a generalization of this phenomenon. Kobayashi showed that the Selmer groups $\Sel_p^\pm(E/\Q_\infty)$ are $\Z_p[[\Gamma]]$-cotorsion and hence the assumption \textbf{(Torsion)} is  satisfied. Here $r=0$ (cf. \cite[Remark 3.5]{Lei2023plus_minus_MW}) and we know that  rank of $E(\Q_{(n)})$ is bounded as $n$ varies (cf. \cite[discussion after eq. (1.1)]{Lei2023plus_minus_MW}).\smallbreak
			\item Note that in the context of abelian varieties of  $GL(2)$-type, under some additional hypotheses, one can ensure that   the matrix of Frobenius $C_p$ is block anti-diagonal  (see \cite[Section 3.3]{LP20}). 
		\end{itemize}
        \section{Questions on the growth of signed structures}
        \subsection{ Some preliminaries}
      \begin{definition}\label{defn:kobayashi_rank}
	Let $(N_n)_n$ be an inverse system of finitely generated $\Z_p$-modules with transition maps $\pi_n:N_n \rightarrow N_{n-1}$. Suppose that the map $\pi_n$ has finite kernel and cokernel. Then the Kobayashi rank $\nabla N_n$ is defined as 
	$$\nabla N_n:=\mathrm{length}_{\Z_p}(\ker \pi_n) - \mathrm{length}_{\Z_p}(\coker \pi_n) + \rank_{\Z_p}N_{n-1}.$$
\end{definition}
Here are a few basic properties that we will need. For clarifications see \cite[Lemma 10.5 (ii)]{kobayashi03} and \cite[Lemma 4.3]{LLZ2017}.

\begin{lemma}\label{lem:short_exact}
Let $0 \rightarrow (N_n^\prime) \rightarrow (N_n) \rightarrow (N_n^{\prime\prime}) \rightarrow 0$ be a short exact sequence. If any two of $\nabla N_n, \nabla N_n^\prime, \nabla N_n^{\prime \prime}$ are defined, then the other is also defined and $$\nabla N_n=\nabla N_n^\prime + \nabla N_n^{\prime \prime}.$$ 
\end{lemma}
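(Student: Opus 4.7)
The plan is a standard diagram chase via the snake lemma, combined with additivity of $\Z_p$-rank across short exact sequences. I would start by writing down the commutative diagram whose rows are the two short exact sequences at level $n$ and $n-1$, with vertical arrows the transition maps $\pi_n'$, $\pi_n$, $\pi_n''$. Applying the snake lemma yields the six-term exact sequence
\[
0 \to \ker\pi_n' \to \ker\pi_n \to \ker\pi_n'' \to \coker\pi_n' \to \coker\pi_n \to \coker\pi_n'' \to 0.
\]

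Next I would deduce the finiteness statement. If any two of the three transition maps have finite kernel and cokernel, then four of the six terms above are finite, and the exactness of the sequence forces the remaining two terms to be finite as well. Hence the third Kobayashi rank is defined.

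For the additivity formula, I would use that for a finite exact sequence of finite abelian groups the alternating sum of their $\Z_p$-lengths vanishes. Applying this to the six-term sequence gives
\[
\bigl(\mathrm{len}(\ker\pi_n)-\mathrm{len}(\coker\pi_n)\bigr)
=\bigl(\mathrm{len}(\ker\pi_n')-\mathrm{len}(\coker\pi_n')\bigr)
+\bigl(\mathrm{len}(\ker\pi_n'')-\mathrm{len}(\coker\pi_n'')\bigr).
\]
Combining this with additivity of $\Z_p$-rank in the short exact sequence $0 \to N_{n-1}' \to N_{n-1} \to N_{n-1}'' \to 0$, namely $\rank_{\Z_p}N_{n-1} = \rank_{\Z_p}N_{n-1}' + \rank_{\Z_p}N_{n-1}''$, and invoking the definition of $\nabla$, one obtains $\nabla N_n = \nabla N_n' + \nabla N_n''$.

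There is no real obstacle here; the only point to be careful about is that the snake lemma connecting map lands in $\coker\pi_n'$ (not $\coker\pi_n$), so that finiteness really propagates from the two assumed sequences to the third via exactness at every intermediate spot, which is automatic once two terms at each relevant stage are finite.
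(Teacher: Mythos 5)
Your proof is correct and is the standard snake-lemma-plus-additivity argument; the paper does not supply its own proof of this lemma but delegates to \cite[Lemma 10.5(ii)]{kobayashi03} and \cite[Lemma 4.3]{LLZ2017}, which argue in essentially this way. One small precision issue for a clean write-up: the blanket claim that finiteness of any four of the six terms in a six-term exact sequence forces finiteness of the remaining two is false in general (e.g.\ $0\to 0\to 0\to 0\to 0\to\Z\to\Z\to 0$ has four finite terms and two infinite ones). What is true, and what you should say, is that for each of the three configurations that actually arise here --- the four finite terms being $\{\ker\pi_n',\ker\pi_n,\coker\pi_n',\coker\pi_n\}$, or $\{\ker\pi_n',\ker\pi_n'',\coker\pi_n',\coker\pi_n''\}$, or $\{\ker\pi_n,\ker\pi_n'',\coker\pi_n,\coker\pi_n''\}$ --- each of the two unknown terms is exhibited by the exact sequence as an extension of a subgroup of a finite group by a quotient of a finite group, hence is finite; this should be checked case by case rather than asserted in general. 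With that adjustment the argument is complete.
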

We write $\omega_n$ for $\omega_n(X):=(1+X)^{p^n}-1$.
\begin{lemma}\label{lem:2.4}
	Let $N$ be a finitely generated torsion $\Z_p[[X]]$-module with the characteristic polynomial $f$, and let $N_n=N/\omega_nN$. Consider the projective system $(N_n)_{n}$. Then for $n \gg 0$, $\nabla N_n$ is defined and 
	$$\nabla N_n = \lambda(N) + (p^n-p^{n-1})\mu(N),$$
 where $\lambda$ and $\mu$ are the Iwasawa $\lambda$- and $\mu$-invariants of $N$.
	\begin{lemma}
		Suppose that $(N_n)_n$ is an inverse system of finitely generated $\Z_p$-modules such that for all $n \geq 1$, $|N_n|=p^{s_n}$ for some integer $s_n$. Then $$\nabla N_n=s_n-s_{n-1}.$$
	\end{lemma}
\end{lemma}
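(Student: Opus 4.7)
The plan is to reduce to elementary summands via the structure theorem for finitely generated torsion $\Lambda$-modules, to use the additivity of Kobayashi rank (Lemma~\ref{lem:short_exact}), and to compute $\nabla$ directly on each piece, using the inner sub-lemma whenever $N_n$ is finite.

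By the structure theorem, there is a pseudo-isomorphism
$$N \sim E := \bigoplus_{i=1}^{r} \Lambda/p^{\mu_i} \;\oplus\; \bigoplus_{j=1}^{s} \Lambda/f_j^{a_j}$$
with each $f_j$ distinguished irreducible, and $\mu(N) = \sum_i \mu_i$, $\lambda(N) = \sum_j a_j \deg f_j$. This pseudo-isomorphism fits into two short exact sequences $0 \to K \to N \to N' \to 0$ and $0 \to N' \to E \to C \to 0$ with $K$ and $C$ finite. Applying $-\otimes_\Lambda \Lambda/\omega_n$ and using $\operatorname{Tor}_1^\Lambda(M,\Lambda/\omega_n) = M[\omega_n]$, the resulting long exact sequences involve the finite terms $K/\omega_n K$, $C/\omega_n C$, $N'[\omega_n]$ and $E[\omega_n]$, all of which stabilize for $n \gg 0$ (the union of $\omega_n$-torsions in $E$ is supported on the summands $\Lambda/\Phi_k^{a_j}$ and is bounded). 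Since any finite $\Lambda$-module $F$ satisfies $\nabla F_n = 0$ for $n \gg 0$ (once the descending chain $\{\omega_n F\}$ stabilizes, $\pi_n$ becomes an isomorphism and $\rank_{\Z_p} F = 0$), Lemma~\ref{lem:short_exact} yields $\nabla N_n = \nabla E_n$ for $n \gg 0$.

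A second application of additivity reduces the verification to each elementary summand. For $N = \Lambda/p^\mu$, Weierstrass division gives $\Lambda/\omega_n \cong \Z_p^{p^n}$, so $|N_n| = p^{\mu p^n}$, and the inner sub-lemma delivers $\nabla N_n = \mu(p^n - p^{n-1})$, matching the claim as $\lambda(N) = 0$. For $N = \Lambda/g$ with $g$ distinguished of degree $d$, the module $N$ is $\Z_p$-free of rank $d$ and $\pi_n$ is surjective with kernel $\omega_{n-1}N/\Phi_n \omega_{n-1}N$. Setting $e = \sum_{k\,:\,\Phi_k \mid g} \varphi(p^k)$, for $n \gg 0$ one checks that $\rank_{\Z_p} N_{n-1} = e$ and $\omega_{n-1}N$ is $\Z_p$-free of rank $d - e$. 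Since $\Phi_n$ is coprime to the characteristic polynomial of $X$ on $\omega_{n-1}N \otimes \Q_p$, the quotient is finite of $\Z_p$-length $\sum_\beta v_p(\Phi_n(\beta))$ taken over the $d - e$ eigenvalues $\beta$. A case analysis shows $v_p(\Phi_n(\beta)) = 1$ for each eigenvalue once $n$ is large: if $\beta = \zeta_{p^k} - 1$ for a root of some $\Phi_k \mid g$, then a L'H\^opital evaluation of $(X^{p^n}-1)/(X^{p^{n-1}}-1)$ at $X = \zeta_{p^k}$ yields $\Phi_n(\beta) = p$ up to a unit; otherwise, for $n$ large $v_p(\beta) > 1/\varphi(p^n)$, so every primitive $p^n$-th root of unity $\zeta$ satisfies $v_p(\beta + 1 - \zeta) = v_p(\zeta - 1) = 1/\varphi(p^n)$, and summing over the $\varphi(p^n)$ primitive roots gives $v_p(\Phi_n(\beta)) = 1$. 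Hence $\nabla N_n = (d - e) + e = d = \lambda(N)$, matching the claim with $\mu(N) = 0$. Summing the contributions across elementary summands by additivity produces $\nabla N_n = \lambda(N) + (p^n - p^{n-1})\mu(N)$ for $n \gg 0$.

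The main obstacle is the pseudo-isomorphism reduction: the snake-lemma long exact sequences introduce $\omega_n$-torsion contributions that must be controlled uniformly in $n$ and shown to leave $\nabla$ unchanged in the limit. The ultrametric computation in the distinguished case is elementary but must be case-split depending on whether the root $\beta$ of $g$ is of the form $\zeta_{p^k} - 1$ or lies in the part of $g$ coprime to all the $\Phi_k$.
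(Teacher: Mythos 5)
Your argument is correct. Note, however, that the paper does not prove this lemma at all: it is stated as a known fact with pointers to Kobayashi's Lemma 10.5(ii) and Lemma 4.3 of Lei--Loeffler--Zerbes, and your proof is essentially the standard argument from those sources (pseudo-isomorphism reduction, additivity of $\nabla$, and a direct valuation computation on the elementary summands $\Lambda/p^{\mu_i}$ and $\Lambda/f_j^{a_j}$). Two small points deserve to be made explicit. First, in the reduction step the four-term sequences coming from $\mathrm{Tor}_1(-,\Lambda/\omega_n)$ must be split into short exact sequences of \emph{inverse systems}, and one should observe that the finite error terms (such as $\mathrm{coker}\bigl(E[\omega_n]\to C[\omega_n]\bigr)$ and the image of $K$ in $N/\omega_nN$) have eventually constant order with compatible transition maps, so that $\nabla=0$ for them; this follows because $\omega_n$ lies in $\mathrm{Ann}(F)$ for any finite $\Lambda$-module $F$ once $n$ is large, and because $E[\omega_n]$ stabilizes. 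Second, you invoke the inner sub-lemma ($\nabla N_n=s_n-s_{n-1}$ for finite $N_n$) without proof; it is immediate from the definition since $\rank_{\Z_p}N_{n-1}=0$ and $\mathrm{len}(\ker\pi_n)-\mathrm{len}(\coker\pi_n)=\mathrm{len}(N_n)-\mathrm{len}(N_{n-1})$, but it is formally part of the statement and should be recorded.
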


\textbf{Question 1.} This question is to study the relation between $\FSha_{\uI}(A^\vee/\Q_{(n)})$ and the classical Tate-Shafarevich group $\Sha(A^\vee/\Q_{(n)}).$ If one removes all the local conditions from the definitions of $\calM_{\uI}(A^\vee/\Q_{(n)}) $ and $\Sel_{\uI}(A^\vee/\Q_{(n)}) $, then one obtains the classical fine Mordell-Weil group $\calM(A^\vee/\Q_{(n)}) $ and the fine Selmer group $\Sel_0(A^\vee/\Q_{(n)}) $ defined by Wuthrich \cite{Wuthrich2007fine}. Their quotient $\FSha(A^\vee/\Q_{(n)}):=\frac{\Sel_0(A^\vee/\Q_{(n)})}{\calM(A^\vee/\Q_{(n)})}$ is then the classical fine Tate-Shafarevich group and Wuthrich in \textit{(loc.cit)} has shown that $\FSha(A^\vee/\Q_{(n)})$ is a subgroup of  $\Sha(A^\vee/\Q_{(n)})[p^\infty]$. 

But the relation between the corresponding signed version $\FSha_{\uI}(A^\vee/\Q_{(n)})$ and $\Sha(A^\vee/\Q_{(n)})[p^\infty]$ is unclear. 

Another related question is whether $\FSha_{\uI}(A^\vee/\Q_{(n)})$ is (at least) conjecturally finite for all $n$.

 \vspace{.2cm}
Having answered question (1), here is another question.
        
      \vspace{.2cm}

\textbf{Question 2.} Suppose $E$ is an elliptic curve over $\Q$ with supersingular reduction at $p$ and $a_p=0$. Then following the second bullet point of remark  \ref{sec:examples} we know that rank $E(\Q_{(n)})$ is bounded as $n$ varies. So by definition $\nabla \big(E(\Q_{(n)}) \otimes \Q_p/\Z_p\big)^{\wedge}$ is ${rank} \text{ }E(\Q_\infty)$ where $(-)^\wedge$ denotes Pontryagin dual. 

\vspace{.2cm}
  Let $\uI_0=\{1,...,g\} \in \calI$. Let $\theta$ be a character on $\Gamma$  of conductor $p^{n+1}$ which is trivial on $\Gal(\Q(\mu_{p^\infty})/\Q_\infty)$. Suppose that 
\begin{equation}\label{cond:character}
\sum_{\uJ}(H_{{\uI}_0,\uJ,n}\col_{T,\uJ}(\textbf{c}))(\theta) \neq 0
    \end{equation}
    for $n \gg 0$ where the matrix $H_{{\uI}_0,\uJ,n}$ is as in \cite[Prop. 3.3]{LP20}. Then Lei--Ponsinet showed that the $\Z_p$-rank of $A^\vee(\Q_{(n)})$ is bounded as $n$ varies (see \cite[Theorem 3.4]{LP20}).
    When the matrix of Frobenius $C_p$ is block anti-diagonal  (see \cite[Section 3.3]{LP20}) and under the assumptions that $\Sel_{\uI_0}(A^\vee/\Q_\infty)$ and $\Sel_{\uI_1}(A^\vee/\Q_\infty)$ are $\Z_p[[\Gamma]]$-cotorsion, one obtains  examples when condition \eqref{cond:character} is true (see \cite[Lemma 3.6]{LP20}).

 Note that $A^\vee(\Q_{(n)}) \otimes \Q_p/\Z_p$ is a divisible group. There is an injection $$\calM_{\uI}(A^\vee/\Q_{(n)}) \rightarrow A^\vee(\Q_{(n)} \otimes \Q_p/\Z_p$$ but $\calM_{\uI}(A^\vee/\Q_{(n)}) $ is not necessarily divisible \cite[Remark 3.4]{Lei2023plus_minus_MW}.
 
 \vspace{.2cm}
 
 \textit{An example:} Wuthrich has shown that for the elliptic curve defined by $E: y^2+y=x^3-x$ and prime $p=179$ (at which $E$ has good, ordinary reduction) the classical fine Mordell-Weil group $\mathcal{M}(E/\Q)$ is a finite non-trivial group and more specifically it is isomorphic to $ \Z/p\Z$ (cf. \cite[Section 7]{Wuthrich2007fine}).

\vspace{.2cm}

 It will be interesting to have several numerical examples of abelian varieties with good supersingular reduction at $p$, for which one can compute
$\calM_{\uI}(A^\vee/\Q_{(n)})$ for all $n$ and explicitly see how it grows as $n \rightarrow \infty$.
Another question is to try to compute the growth of the $\uI$-Tate-Shafarevich groups $\FSha_{\uI}(A^\vee/\Q_{(n)})$ as $n$-varies.

\vspace{.2cm}

\begin{remark}
Assuming $\FSha_{\uI}(A^\vee/\Q_{(n)})$ is finite for all $n$, the answer to compute the growth of the $\uI$-Tate-Shafarevich group $\FSha_{\uI}(A^\vee/\Q_{(n)})$ in question (2) above (in a general framework) has been proposed by Meng Fai Lim recently in the following way. Taking the control theorem (cf. Theorem \ref{thm:I_MW}) into consideration, one may apply the argument similar to that in \cite[Proposition 4.1]{Lim2020} to show that the sequence 
$$0 \rightarrow \FSha_{\uI}(A^\vee/\Q_{\infty})^\wedge \rightarrow \mathfrak{G}\left(\Sel_{\uI}(A^\vee/\Q_{\infty})^\wedge\right) \rightarrow \mathfrak{G}\left(\mathcal{M}_{\uI}(A^\vee/\Q_{\infty})^\wedge\right)\rightarrow 0$$
is exact.  Here $\mathfrak{G}$ is defined as in the sense of Lee \cite{Lee20} (also see \cite[Section 2]{Lim2020}). In particular, by \cite[Prop. 2.4 (1)]{Lee20}, $\FSha_{\uI}(A^\vee/\Q_{\infty})^\wedge $ is $\Z_p[[\Gamma]]$-torsion.

Assume that $A^\vee(\Q_{(n)})$ is bounded as $n$ varies. Taking this boundedness of Mordell-Weil rank into account, the argument in \cite[Theorem 4.6]{Lim2020} will give us an asymptotic formula for the growth of the $p$-exponent of  $\FSha_{\uI}(A^\vee/\Q_{(n)})$ as $n$-varies. More precisely, one obtains,
$$e\left(\FSha_{\uI}(A^\vee/\Q_{(n)})\right)=\mu\left(\FSha_{\uI}(A^\vee/\Q_{\infty})^\wedge\right)p^n+\lambda\left(\FSha_{\uI}(A^\vee/\Q_{\infty})^\wedge\right)n+\nu$$
for $n \gg 0$ where $e\left(\FSha_{\uI}(A^\vee/\Q_{(n)})\right)=\ord_p(|\FSha_{\uI}(A^\vee/\Q_{(n)})|)$ and $\mu$- and $\lambda$- are classicial Iwasawa invariants of the torsion module $\FSha_{\uI}(A^\vee/\Q_{\infty})^\wedge $.

\end{remark}
\section*{Declarations}
\subsection*{Ethics declarations}

\subsubsection*{Competing interest.} The author has no competing interests to declare that are relevant to the content of this article.

\bibliographystyle{alpha}
\bibliography{main}
\end{document}